\theoremstyle{plain}
\newtheorem{theorem}{Theorem}[section]
\newtheorem{corollary}[theorem]{Corollary}
\theoremstyle{definition}
\newtheorem{definition}[theorem]{Definition}
\newtheorem{example}[theorem]{Example}
\theoremstyle{remark}
\newtheorem{remark}{Remark}
\newcommand{\bbN}{\mathbb{N}}
\newcommand{\bbR}{\mathbb{R}}
\begin{document}
\title{Green's Function for Higher-Order Boundary Value Problems Involving a Nabla Caputo Fractional Operator}

\author{
\name{Kevin Ahrendt\thanks{CONTACT Kevin Ahrendt. Email: kahrendt@mines.edu} and Cameron Kissler}
\affil{Colorado School of Mines, Department of Applied Mathematics and Statistics, Golden, CO 80401, USA}
}

\maketitle

\begin{abstract}
We consider the discrete, fractional operator $\left(L_a^\nu x\right) (t) := \nabla [p(t) \nabla_{a^*}^\nu x(t)] + q(t) x(t-1)$ involving the nabla Caputo fractional difference, which can be thought of as an analogue to the self-adjoint differential operator. We show that solutions to difference equations involving this operator have expected properties, such as the form of solutions to homogeneous and nonhomogeneous equations. We also give a variation of constants formula via a Cauchy function in order to solve initial value problems involving $L_a^\nu$. We also consider boundary value problems of any fractional order involving $L_a^\nu$. We solve these BVPs by giving a definition of a Green's function along with a corresponding Green's Theorem. Finally, we consider a (2,1) conjugate BVP as a special case of the more general Green's function definition.
\end{abstract}

\begin{keywords}
Caputo fractional difference, Cauchy function, Green's function, Fractional boundary value problem
\end{keywords}

\begin{amscode}
39A10, 39A70
\end{amscode}

In this paper, we consider boundary value problems with the discrete, fractional operator $\left(L_a^\nu x\right)(t) := \nabla [p(t) \nabla_{a^*}^\nu x(t)] + q(t) x(t-1)$ which involves the Caputo fractional difference. This particular operator, when $0 < \nu < 1$, can be thought of as a discrete, fractional analogue to the self-adjoint differential operator. We extend results for this operator in the case of $\nu > 1$, building off the work in \cite{ahrendt2017}.

Recently, nabla fractional calculus has been investigated and developed in detail by Abdeljawad, Atici and Eloe, and Hein et al.\cite{Hein,Abdeljawad,aticiEloe}, among others. The Caputo fractional difference, in particular, has been of interest to Ahrendt et al., Anastassiou, and Jonnalagadda \cite{anas,ahrendt2016,Jonnalagadda}. Goodrich and Peterson \cite{goodrichPeterson} give a self-contained introduction to the nabla fractional calculus involving both the Riemann-Liouville and Caputo fractional differences. Boundary value problems involving the nabla Caputo fractional difference have been studied by Ikram and St. Goar \cite{ikram, StGoar}.

In the first section, we introduce the fractional calculus background required. In the second section, we investigate the $L_a^\nu$ operator in the context of initial value problems. In the third section, we consider boundary value problems involving this operator with $N$ boundary conditions at the left endpoint and $1$ boundary condition at the right endpoint. The final section gives an example of a specific Green's function for a (2,1) conjugate boundary value problem.

\section{Preliminary definitions and theorems}
\begin{definition}
If $a\in \bbR$, then the set $\bbN_a$ is given by $\{a,a+1,a+2,\hdots\}$. Furthermore, if $b \in \bbN_a$, then $\bbN_a^b$ is given by $\{a,a+1, \ldots, b-1,b\}$.
\end{definition}


\begin{definition}\cite{goodrichPeterson}
Let $f:\bbN_a \to \bbR$.  Then, the \textit{nabla difference of $f$} is defined by 
\[
(\nabla f)(t) := f(t) - f(\rho(t))= f(t) - f(t-1),
\]
for $t\in \bbN_{a+1}$, where $\rho(t) :=t-1$ is the backwards jump operator.  For convenience, we will use the notation $\nabla f(t) := (\nabla f)(t)$. For $N \in \bbN_2$, we have that the \textit{$N$-th order difference} is recursively defined by
\[
\nabla^N f(t) := \nabla (\nabla^{N-1} f(t)),
\]
for $t\in \bbN_{a+N}$.   Finally, we take by convention $\nabla^0 f(t) = f(t)$.
\end{definition}


\begin{definition}\cite{goodrichPeterson}
Let $f:\bbN_a \to \bbR$ and let $c,d\in \bbN_a$.  Then, the \textit{definite nabla integral of $f$ from $c$ to $d$} is defined by
\[
\int_c^d f(s)\nabla s := \begin{cases}
					\sum_{s=c+1}^d f(s), & c<d, \\
					0, & d\leq c.
				\end{cases}
\]
\end{definition}

\begin{theorem}[Nabla Leibniz Rules]\cite{goodrichPeterson}\label{leibnizFormula}
Assume $f: \bbN_a \times \bbN_{a+1} \to \bbR$. Then,
\begin{enumerate}
\item $\nabla \left( \int_{a}^t f(t, s) \nabla s \right) = \int_a^t \nabla_t f(t, s) \nabla s + f(\rho(t),t)$;
\item $\nabla \left( \int_{a}^t f(t,s) \nabla s \right) = \int_a^{t-1} \nabla_t f(t,s) \nabla s + f(t,t)$.
\end{enumerate}
\end{theorem}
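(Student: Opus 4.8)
The plan is to prove both identities directly from the definitions, reducing each to a finite sum and carefully tracking a single boundary term. Write $F(t) := \int_a^t f(t,s)\,\nabla s = \sum_{s=a+1}^t f(t,s)$ for $t \in \bbN_{a+1}$, so that by the definition of the nabla difference
\[
\nabla F(t) = F(t) - F(t-1) = \sum_{s=a+1}^t f(t,s) - \sum_{s=a+1}^{t-1} f(t-1,s).
\]
The entire argument then consists of deciding how to pair up the two sums, since they have different upper limits ($t$ versus $t-1$) and the integrand in the second sum is evaluated at the shifted first argument $t-1 = \rho(t)$.

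For the second identity, I would peel off the top term $f(t,t)$ from the first sum so that both sums run over $s = a+1, \ldots, t-1$, then combine them term by term. Each paired term becomes $f(t,s) - f(t-1,s) = \nabla_t f(t,s)$, and resumming yields $\sum_{s=a+1}^{t-1} \nabla_t f(t,s) + f(t,t) = \int_a^{t-1} \nabla_t f(t,s)\,\nabla s + f(t,t)$, which is exactly the claimed formula.

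For the first identity, instead I would extend the second sum to the range $s = a+1, \ldots, t$ by adding and subtracting $f(t-1,t)$; combining termwise over the full range $[a+1,t]$ then produces $\sum_{s=a+1}^t \nabla_t f(t,s)$, and the leftover term is precisely $f(t-1,t) = f(\rho(t),t)$. Recognizing the sum as $\int_a^t \nabla_t f(t,s)\,\nabla s$ completes this case.

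I do not expect a genuine obstacle here: both parts are pure index bookkeeping. The only point requiring care is the mismatch in the upper summation limits together with the shift $t \mapsto t-1$ in the first argument of $f$. The two identities are really just the two natural ways to resolve that mismatch, absorbing the boundary contribution either at the right end or at the left end of the summation range, which explains why the surviving boundary term is $f(t,t)$ in one case and $f(\rho(t),t)$ in the other.
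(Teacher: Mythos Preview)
Your argument is correct: both identities reduce to the finite-sum manipulation you describe, and the two ways of handling the mismatched upper limits yield exactly the boundary terms $f(t,t)$ and $f(\rho(t),t)$ as claimed. Note, however, that the paper does not supply its own proof of this theorem; it is quoted from \cite{goodrichPeterson} as a preliminary result, so there is no in-paper argument to compare against. Your direct computation is the standard proof one would expect for this Leibniz rule.
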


We desire an analogue to the continuous power rule. For example, $\nabla t^2 = t^2-(t-1)^2 = 2t-1$, so the power rule involving a nabla difference does not hold with regular exponents. The following rising function replaces the exponent to allow a power rule in the nabla difference case.

\begin{definition}\cite{goodrichPeterson}
Let $t\in \bbR$ and $\nu \in \bbR$, the \textit{generalized rising function} is defined by
\[
t^{\overline \nu} := \frac{\Gamma(t+\nu)}{\Gamma(t)},
\]
for $t$ and $\nu$ such that $t+\nu \not \in \{\ldots,-2,-1,0\}$. If $t$ is a non-positive integer and $t + \nu$ is not a non-positive integer, then we take by convention $t^{\overline \nu} = 0$.  
\end{definition}

%

To extend the whole-order difference and integral to fractional values, we define nabla fractional Taylor monomials.

\begin{definition}\cite{goodrichPeterson}
Let $\nu \in \bbR$. The \textit{$\nu$-th order nabla fractional Taylor monomial}, denoted $H_\nu(t,a)$, is given by
\[
H_\nu(t,a) := \frac{(t-a)^{\overline{\nu}}}{\Gamma(\nu+1)},
\]
whenever the right-hand side of the equation is sensible. By convention, we take $H_0(t,a) = 1$.
\end{definition}

%
%

The following definition extends the nabla integral to non-integer orders.

\begin{definition}\cite{goodrichPeterson}
Let $f:\bbN_{a+1}\rightarrow\mathbb{R}$ and $\nu > 0$. The \textit{$\nu$-th order nabla fractional integral of f, based at a,} is defined as 
	\[
	\nabla_a^{-\nu}f(t):=\int_{a}^t H_{\nu-1}(t,\rho(s))f(s) \nabla s,
	\]
for $t\in\bbN_a$. By the convention regarding nabla integrals, we take $\nabla_a^{-\nu}f(a) = 0$.
\end{definition}

\begin{theorem}[Composition Rules]\cite{goodrichPeterson} \label{compositionRule}
Suppose $f: \bbN_{a} \rightarrow \mathbb{R}$. Let $\mu> 0$ and $N \in \bbN_1$ be given. The following hold:
\begin{enumerate}
\item $\nabla^N \nabla_a^{-\mu} f(t) = \nabla_a^{N-\mu} f(t)$, \quad for $t \in \bbN_a$;
\item $\nabla_a^{-\mu} \nabla_a^{\mu} f(t) = f(t)$, \quad for $t \in \bbN_a$;
\end{enumerate}
where $\nabla_a^\mu$ is the Riemann-Liouville fractional difference.
\end{theorem}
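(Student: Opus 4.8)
The plan is to treat part (1) as a statement about how the ordinary nabla difference interacts with the fractional integral, reduce it to a single-step reduction rule by induction, and then build part (2) on top of part (1) together with an index law for fractional integrals. Throughout I use the definition of the Riemann--Liouville difference $\nabla_a^\mu f(t) := \nabla^N \nabla_a^{-(N-\mu)}f(t)$ with $N = \lceil\mu\rceil$, the power rule $\nabla_t H_\nu(t,s) = H_{\nu-1}(t,s)$ (immediate from the rising-function definition), and the convention $\nabla_a^{-\nu}f(a) = 0$.

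For part (1), the heart is the single-step identity $\nabla \nabla_a^{-\mu}f(t) = \nabla_a^{1-\mu}f(t)$. First I would apply the first Leibniz rule of Theorem \ref{leibnizFormula} to $\nabla_a^{-\mu}f(t) = \int_a^t H_{\mu-1}(t,\rho(s))f(s)\nabla s$, differentiating the monomial inside the integral by the power rule and collecting the boundary term $H_{\mu-1}(\rho(t),\rho(t))f(t)$. The diagonal value $H_{\mu-1}(\rho(t),\rho(t))$ evaluates to $0$ when $\mu > 1$ (since $0^{\overline{\mu-1}} = 0$ by the rising-function convention) and to $1$ when $\mu = 1$; this yields $\nabla\nabla_a^{-\mu}f = \nabla_a^{-(\mu-1)}f$ for $\mu>1$ and $\nabla\nabla_a^{-1}f = f$ for $\mu=1$, while for $0<\mu<1$ the identity is exactly the definition of the Riemann--Liouville difference. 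Iterating this single step $N$ times carries $\nabla_a^{-\mu}$ through $\nabla_a^{-\mu+1},\ldots$ up to $\nabla_a^{N-\mu}$, giving part (1); the cases where $N-\mu \ge 0$ reduce to bookkeeping against the definition of the fractional difference.

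For part (2), I would write $\nabla_a^\mu f = \nabla^N\nabla_a^{-(N-\mu)}f$ and set $g := \nabla_a^{-(N-\mu)}f$, so that $\nabla_a^{-\mu}\nabla_a^\mu f = \nabla_a^{-\mu}\nabla^N g$. The aim is to commute the fractional integral past the $N$ whole differences. Using summation by parts, one obtains the single commutation $\nabla_a^{-\mu}\nabla h(t) = \nabla\nabla_a^{-\mu}h(t) - H_{\mu-1}(t,a)h(a)$, and iterating it $N$ times leaves boundary terms built from $(\nabla^k g)(a)$ for $0 \le k \le N-1$. The crucial observation is that each such term vanishes: by part (1), $\nabla^k g = \nabla_a^{k-(N-\mu)}f$, which for $k=0$ is a fractional integral (zero at $a$ by convention) and for $k\ge 1$ is a fractional difference whose value at the base point is zero once the integral is extended by zero below $a$. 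Hence $\nabla_a^{-\mu}\nabla^N g = \nabla^N\nabla_a^{-\mu}g$, and applying the index law $\nabla_a^{-\mu}\nabla_a^{-(N-\mu)}f = \nabla_a^{-N}f$ followed by part (1) in the form $\nabla^N\nabla_a^{-N}f = f$ finishes the proof.

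The main obstacle is not the outer structure but the two supporting identities. The first is the index (semigroup) law $\nabla_a^{-\mu}\nabla_a^{-\nu}f = \nabla_a^{-(\mu+\nu)}f$, which I would prove by interchanging the order of summation in the iterated sum and establishing the discrete monomial convolution $\int_r^t H_{\mu-1}(t,\rho(s))H_{\nu-1}(s,\rho(r))\nabla s = H_{\mu+\nu-1}(t,\rho(r))$ (a Vandermonde-type identity for rising functions). The second is the careful tracking of the diagonal values of $H$ and the vanishing of the boundary terms, which hinges delicately on the rising-function conventions for non-positive integer arguments; these conventions are also what force the statement to be read for $\mu \notin \bbN$ (equivalently, to take $N$ strictly larger than $\mu$), since for integer $\mu$ the inner integral degenerates and the boundary term survives, e.g. $\nabla_a^{-1}\nabla f(t) = f(t) - f(a)$.
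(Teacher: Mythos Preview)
The paper does not prove this theorem; it is stated as a preliminary result and cited directly from Goodrich and Peterson \cite{goodrichPeterson}, so there is no in-paper argument to compare against.

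That said, your outline is essentially the standard route taken in that reference. For part (1), reducing to the one-step identity $\nabla\nabla_a^{-\mu}f = \nabla_a^{1-\mu}f$ via the Leibniz rule and the power rule $\nabla_t H_{\mu-1}(t,\rho(s)) = H_{\mu-2}(t,\rho(s))$, then iterating, is exactly how it is usually done, and your case split on the diagonal value $H_{\mu-1}(\rho(t),\rho(t))$ is correct under the paper's rising-function convention. For part (2), combining the semigroup law $\nabla_a^{-\mu}\nabla_a^{-(N-\mu)} = \nabla_a^{-N}$ with a commutation of $\nabla_a^{-\mu}$ past $\nabla^N$ obtained by summation by parts is again the textbook strategy, and you rightly flag the two supporting facts that carry the weight: the Vandermonde-type monomial convolution behind the semigroup law, and the vanishing of the boundary data $(\nabla^k g)(a)$.

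One small sharpening: your justification that $(\nabla^k g)(a) = 0$ for $k \ge 1$ appeals to part (1) to rewrite $\nabla^k g$ as a fractional \emph{difference} and then evaluate at the base point, but that evaluation is itself what needs the zero-extension convention you mention only parenthetically. It is cleaner to argue directly that $g = \nabla_a^{-(N-\mu)}f$ satisfies $g(a) = 0$ and is extended by zero below $a$, so that $\nabla^k g(a) = 0$ follows immediately from the definition of the whole-order difference; equivalently, use the explicit summation form of $\nabla_a^{k-(N-\mu)}f$, which is an empty sum at $t = a$. Your closing remark about integer $\mu$ is apt and matches the care Goodrich and Peterson take with that edge case.
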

%

We now use the definition of the nabla fractional integral and the definition of the whole order difference to define the Caputo fractional difference.

\begin{definition}\cite{goodrichPeterson}
Let $f:\bbN_{a-N+1}\rightarrow\mathbb{R}$, $\nu>0$, and $N=\lceil\nu\rceil$.  The \textit{$\nu$-th order Caputo nabla fractional difference} is defined by
	\[
	\nabla_{a*}^{\nu}f(t):=\nabla_a^{-(N-\nu)}(\nabla^Nf(t)),
	\]
for $t\in\bbN_{a}$. Note that the Caputo difference operator is a linear operator, and we consider a difference equation involving just the Caputo operator to behave like it is of order $N$.
\end{definition}

\section{Fractional Initial Value Problems for a Particular Operator}
This section introduces basic theory regarding a particular discrete, fractional operator, as well as how it applies to initial value problems. It builds on Ahrendt et al. \cite{ahrendt2016} by considering cases when $\nu >1$, while also correcting an off by one error present in that paper.
\begin{definition}
Let $\nu \in \bbR$ be given such that $N-1<\nu<N$ for some $N \in \mathbb{N}_1$. We define the fractional operator $L_a^\nu$ based at $a$ with Caputo order $\nu$ by
\[
(L_a^\nu x)(t):= \nabla[p(t)\nabla_{a^*}^\nu x(t)]+ q(t)x(t-1), \quad t \in \mathbb{N}_{a+N+1},
\]
where $p: \bbN_{a+N} \to (0,\infty)$ and $q: \bbN_{a+N+1}\to\bbR$. Note that $\left(L_a^\nu x\right) (t)$ is defined for $t \in \bbN_{a+N+1}$, but $x(t)$ must be defined for $t \in \bbN_a$. Also, $L_a^\nu$ is a linear operator. Finally, the order of the operator $L_a^\nu$ behaves like it is $N+1$, as there is a whole order difference operating on the Caputo operator of order $\nu$.
\end{definition}
\begin{remark}
The $L_a^\nu$ operator takes the form of a discrete, fractional version of the self-adjoint differential equation when $0 < \nu < 1$. This case is considered in detail in \cite{ahrendt2017}.
\end{remark}

The following theorem is proven by expanding the operators in $L_a^\nu$ and solving for $x(t)$ in terms of the known functions $p(t)$, $q(t)$, and $x(s)$, for $s \in \bbN_a^{t-1}$.
\begin{theorem}[Existence and Uniqueness for IVPs]\label{existenceUniquenessThm}
Let $\nu \in \bbR$ such that $N-1 < \nu < N$ for some $N \in \bbN_1$ and $h: \bbN_{a+N+1} \to \bbR$. If $p(t) \neq \frac{p(a+1)}{H_{-1-\nu}(t+1,a)-H_{N-1-\nu}(t+1,a)}$ for some $ t\in \bbN_{a+N+1}$, then the initial value problem
\[
\begin{cases}
L_a^\nu x(t) = \nabla[p(t)\nabla_{a^*}^\nu x(t)]+q(t) x(t-1) = 0, & t \in \bbN_{a+N+1}, \\
\nabla^i x(a+i) = A_i, \quad i \in \bbN_0^{N},
\end{cases}
\]
where $A_i \in \bbR$ for $i \in \bbN_0^{N}$, has a unique solution $x: \bbN_a \to \bbR$.
\end{theorem}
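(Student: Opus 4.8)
The plan is to solve the initial value problem by an explicit forward recursion: I would first show that the $N+1$ initial conditions determine $x$ on the initial block $\{a,a+1,\ldots,a+N\}$, and then that the equation $L_a^\nu x(t)=0$ determines $x(t)$ uniquely for each successive $t\in\bbN_{a+N+1}$, so that $x$ is pinned down on all of $\bbN_a$ one value at a time. Existence and uniqueness then follow simultaneously from the fact that at every step exactly one new value is being solved for from a single equation, provided the relevant coefficient does not vanish.

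First I would dispatch the initial conditions. Expanding $\nabla^i x(a+i)=\sum_{j=0}^{i}(-1)^j\binom{i}{j}x(a+i-j)=A_i$, the coefficient of the top value $x(a+i)$ is $1$, so the conditions for $i\in\bbN_0^{N}$ form a unit lower-triangular linear system in the unknowns $x(a),\ldots,x(a+N)$; this system is invertible, so the $A_i$ determine $x(a),\ldots,x(a+N)$ uniquely. Next I would set up the recursion for $t\in\bbN_{a+N+1}$. Writing $L_a^\nu x(t)=p(t)\nabla_{a^*}^\nu x(t)-p(t-1)\nabla_{a^*}^\nu x(t-1)+q(t)x(t-1)=0$ and inserting the definition $\nabla_{a^*}^\nu x(t)=\nabla_a^{-(N-\nu)}\nabla^N x(t)=\int_a^t H_{N-\nu-1}(t,\rho(s))\nabla^N x(s)\,\nabla s$, I would expand each fractional sum into the underlying values $x(s)$. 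Every term except the contribution of the highest index involves only $x(s)$ with $s\in\bbN_a^{t-1}$, which are known by the inductive hypothesis; moving them to the right, the equation becomes $(\text{coefficient})\cdot x(t)=(\text{known data in }p,q,\text{ and }x(s),\ s\le t-1)$, exactly the form anticipated by the remark preceding the statement.

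The main obstacle, and the only step beyond bookkeeping, is to evaluate the coefficient of $x(t)$ in closed form and to recognize that the hypothesis is precisely its nonvanishing. This requires care with the lower limit of the nabla fractional sum together with the nabla Leibniz rules (Theorem~\ref{leibnizFormula}) and the generalized rising-function identities, and it is exactly here that the off-by-one correction advertised in the introduction is needed: a naive reading of the sum would attribute $x(t)$ only the monomial value $H_{N-\nu-1}(t,t-1)=1$, whereas the correct accounting of the based-at-$a$ difference contributes the additional boundary terms that produce the quantity $H_{-1-\nu}(t+1,a)-H_{N-1-\nu}(t+1,a)$ and the factor $p(a+1)$. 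After this simplification the coefficient is a nonzero multiple of the expression whose vanishing is excluded, so the assumption $p(t)\neq p(a+1)/\bigl[H_{-1-\nu}(t+1,a)-H_{N-1-\nu}(t+1,a)\bigr]$ guarantees we may divide and solve for $x(t)$. Inducting on $t$ then yields a unique $x:\bbN_a\to\bbR$ satisfying both the equation and the initial conditions, which completes the argument.
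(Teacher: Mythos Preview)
Your proposal is correct and follows essentially the same approach as the paper: the paper does not give a detailed proof but only remarks that the theorem ``is proven by expanding the operators in $L_a^\nu$ and solving for $x(t)$ in terms of the known functions $p(t)$, $q(t)$, and $x(s)$, for $s \in \bbN_a^{t-1}$,'' which is precisely the forward-recursion strategy you outline. Your sketch in fact goes further than the paper by explaining why the initial block is uniquely determined (the triangular system) and by identifying the coefficient computation as the place where the hypothesis on $p(t)$ enters.
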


This next theorem and corollary are proven in a standard way, relying on the linearity of the $L_a^\nu$ operator and Theorem \ref{existenceUniquenessThm}.
\begin{theorem}[General Solution of the Homogeneous Equation]\label{generalSolutionThm}
Let $\nu \in \bbR$ such that $N-1 < \nu < N$ for some $N \in \bbN_1$ be given. Suppose $x_0$, $x_1$, \ldots, $x_{N}$ are $N+1$ linearly independent solutions to $L_a^\nu x(t) =0$. The general solution to $L_a^\nu x(t) = 0$ is given by
\[
x(t) = \sum_{i=0}^N c_i x_i (t), \quad t\in \bbN_a,
\]
where $c_0,c_1,\ldots c_N \in \bbR$ are arbitrary constants.
\end{theorem}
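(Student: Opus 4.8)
The plan is to identify the set of solutions to $L_a^\nu x(t)=0$ with an $(N+1)$-dimensional vector space and then argue that any $N+1$ linearly independent solutions automatically form a basis, so that every solution is one of the claimed linear combinations. First I would dispatch the easy inclusion: since $L_a^\nu$ is linear, for any constants $c_0,\ldots,c_N\in\bbR$ the function $x(t)=\sum_{i=0}^N c_i x_i(t)$ satisfies
\[
L_a^\nu x(t)=\sum_{i=0}^N c_i\,L_a^\nu x_i(t)=0,
\]
so every such combination is a solution and the span of $\{x_0,\ldots,x_N\}$ is contained in the solution set. The content of the theorem is therefore the reverse inclusion, that an arbitrary solution lies in this span.

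The key step, and the one I expect to carry the weight of the argument, is to pin down the dimension of the solution space using the initial-data structure supplied by Theorem \ref{existenceUniquenessThm}. I would introduce the linear map $T$ that sends a solution $x$ to its tuple of initial values,
\[
T(x):=\bigl(\nabla^0 x(a),\ \nabla^1 x(a+1),\ \ldots,\ \nabla^N x(a+N)\bigr)\in\bbR^{N+1},
\]
which is well defined since each solution is defined on all of $\bbN_a$ and each $\nabla^i x(a+i)$ depends only on values at $a,a+1,\ldots,a+i$. Existence in Theorem \ref{existenceUniquenessThm} makes $T$ surjective, because every prescribed tuple $(A_0,\ldots,A_N)$ is realized by some solution; uniqueness makes $T$ injective, because a solution with vanishing initial data must be the zero solution. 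Hence $T$ is a linear isomorphism between the solution set and $\bbR^{N+1}$, and the solution set is a vector space of dimension exactly $N+1$.

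With the dimension in hand, the conclusion is immediate linear algebra: linear independence of $x_0,\ldots,x_N$ as functions on $\bbN_a$ is exactly their linear independence inside this solution space, and $N+1$ linearly independent vectors in an $(N+1)$-dimensional space form a basis. Thus every solution is a linear combination $\sum_{i=0}^N c_i x_i(t)$, giving the reverse inclusion and completing the proof. The only point requiring care is ensuring the hypothesis on $p$ from Theorem \ref{existenceUniquenessThm} is in force so that both existence and uniqueness genuinely apply; once that is granted, the whole argument reduces to transferring the problem to finite-dimensional linear algebra through the isomorphism $T$.
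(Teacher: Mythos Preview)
Your proposal is correct and matches the paper's approach: the paper does not write out a detailed proof but simply remarks that the result is ``proven in a standard way, relying on the linearity of the $L_a^\nu$ operator and Theorem \ref{existenceUniquenessThm},'' which is exactly the argument you outline (linearity for the easy inclusion, and the existence--uniqueness theorem to identify the solution space with $\bbR^{N+1}$ via initial data). Your caveat about the hypothesis on $p$ from Theorem~\ref{existenceUniquenessThm} is well taken, though the paper itself leaves this implicit.
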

%
%

\begin{corollary}[General Solution of the Nonhomogeneous Equation]\label{nonHomogeneousGeneralSolutionThm}
Let $\nu \in \bbR$ such that $N-1 < \nu < N$ for some $N \in \bbN_1$ be given. Also, let $x_0$, $x_1$, \ldots, $x_{N}$ $: \bbN_a \to \bbR$ be $N+1$ linearly independent solutions to $L_a^\nu x(t) = 0$. If $x_p : \bbN_a \to \bbR$ is a particular solution to $L_a^\nu x(t) = h(t)$ for some $h : \bbN_{a+N+1} \to \bbR$, then the general solution to $L_a^\nu x(t) = h(t)$, for $t \in \bbN_{a+N+1}$, is given by
\[
x(t) = x_p(t)+ \sum_{i=0}^{N} c_ix_i(t),
\]
for arbitrary constants $c_i \in \bbR$ for $i \in \bbN_0^{N}$.
\end{corollary}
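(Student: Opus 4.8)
The plan is to establish the two directions implicit in the phrase ``general solution'': first, that every member of the proposed family solves the nonhomogeneous equation, and second, that every solution of the nonhomogeneous equation arises in this way. Both steps lean only on the linearity of $L_a^\nu$, already recorded in the operator's definition, together with the characterization of the homogeneous general solution in Theorem~\ref{generalSolutionThm}.

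For the first direction, I would fix arbitrary constants $c_0,\ldots,c_N \in \bbR$ and apply $L_a^\nu$ to $x(t) = x_p(t) + \sum_{i=0}^N c_i x_i(t)$. Linearity gives
\[
L_a^\nu x(t) = L_a^\nu x_p(t) + \sum_{i=0}^N c_i \, L_a^\nu x_i(t) = h(t) + \sum_{i=0}^N c_i \cdot 0 = h(t),
\]
for $t \in \bbN_{a+N+1}$, since each $x_i$ solves the homogeneous equation and $x_p$ is the assumed particular solution. Hence every such $x$ is a solution to $L_a^\nu x(t) = h(t)$.

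For the second direction, I would take an arbitrary solution $y : \bbN_a \to \bbR$ of $L_a^\nu y(t) = h(t)$ and consider the difference $y - x_p$. Linearity yields $L_a^\nu (y - x_p)(t) = h(t) - h(t) = 0$, so $y - x_p$ is a solution of the homogeneous equation $L_a^\nu x(t) = 0$. By Theorem~\ref{generalSolutionThm}, there exist constants $c_0,\ldots,c_N \in \bbR$ with $(y - x_p)(t) = \sum_{i=0}^N c_i x_i(t)$ for $t \in \bbN_a$, and rearranging gives $y(t) = x_p(t) + \sum_{i=0}^N c_i x_i(t)$, which is of the claimed form.

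Since both steps are immediate consequences of linearity and the already-proved homogeneous result, there is no genuine obstacle here; the only point requiring care is to confirm that the hypotheses of Theorem~\ref{generalSolutionThm} are satisfied, namely that $x_0,\ldots,x_N$ are $N+1$ linearly independent solutions of the homogeneous equation, which is exactly what is assumed. This is why the corollary follows in the \emph{standard way} advertised before the statement.
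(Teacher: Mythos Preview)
Your proposal is correct and matches the paper's intended argument: the paper does not spell out a proof but simply notes that the corollary is proven in a standard way using the linearity of $L_a^\nu$ together with the homogeneous result, which is exactly the two-direction argument you give.
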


In order to derive a variations of constants formula, we define the following Cauchy function.

\begin{definition}[Cauchy Function]\label{cauchyDefn}
The Cauchy function for $L_a^\nu x(t) = 0$ 
 is the function $x(t,s)$ where $x: \bbN_a \times \bbN_{a+N+1} \to \bbR$ and, for any fixed $s \in \bbN_{a+N+1}$,  satisfies the initial value problem
\[
\begin{cases}
L_{\rho(s)}^\nu x(t,s) = 0, \quad t \in \bbN_{s+N}, \\
\nabla^i x(\rho(s),s) = 0, \quad i \in \bbN_0^{N-1}, \\
\nabla^{N} x(s,s) = \frac{1}{p(s)}.
\end{cases}
\]
\end{definition}

\begin{theorem}[Variation of Constants]\label{variationOfConstantsThm}
Let $\nu \in \bbR$ such that $N-1< \nu < N$ for some $N \in \bbN_1$ and $h : \bbN_{a+N+1} \to \bbR$. Then, the solution to the initial value problem
\[
\begin{cases}
L_a^\nu x(t) = h(t), \quad t \in \bbN_{a+N+1}, \\
\nabla^i x(a+i) = 0, \quad i \in \bbN_0^{N},
\end{cases}
\]
is given by
\[
x(t) = \int_{a+N}^t x(t,s) h(s) \nabla s,
\]
where $x(t,s)$ is the Cauchy function given in Definition \ref{cauchyDefn}.
\end{theorem}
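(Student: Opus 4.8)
The plan is to set $y(t) := \int_{a+N}^t x(t,s)\,h(s)\,\nabla s$, verify that $y$ satisfies both the equation $L_a^\nu y = h$ and the homogeneous initial data, and then appeal to uniqueness. Uniqueness is available because the equation determines $x(t)$ recursively from the earlier values of $x$ on $\bbN_a^{t-1}$ regardless of the right-hand side (this is the recursion underlying Theorem~\ref{existenceUniquenessThm}), so the IVP has at most one solution; hence it suffices to exhibit $y$ as a solution.

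First I would handle the initial conditions while simultaneously reducing the whole-order part of the operator. Applying the first nabla Leibniz rule (Theorem~\ref{leibnizFormula}) to $y$ gives $\nabla y(t)=\int_{a+N}^t \nabla_t x(t,s)\,h(s)\,\nabla s + x(\rho(t),t)\,h(t)$, and the boundary term drops out because the Cauchy condition $\nabla^0 x(\rho(s),s)=0$ forces $x(\rho(t),t)=0$. Iterating, the $k$-th differentiation produces a boundary term $\nabla_t^{k-1} x(\rho(t),t)\,h(t)$, which vanishes by the condition $\nabla^{k-1}x(\rho(s),s)=0$ as long as $k-1\in\bbN_0^{N-1}$; thus
\[
\nabla^k y(t)=\int_{a+N}^t \nabla_t^k x(t,s)\,h(s)\,\nabla s, \qquad 1\le k\le N.
\]
Evaluating at $t=a+i$ for $0\le i\le N$ gives $\nabla^i y(a+i)=0$, since the upper limit $a+i$ never exceeds the lower limit $a+N$ and the integral is empty by convention. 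This disposes of all $N+1$ initial conditions without yet using the normalization $\nabla^N x(s,s)=1/p(s)$.

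The substantive computation is $L_a^\nu y(t)=h(t)$. Using $\nabla_{a^*}^\nu y(t)=\nabla_a^{-(N-\nu)}\nabla^N y(t)$ together with the $k=N$ identity above, I would write both fractional sums explicitly and interchange their order; the resulting inner sum over $\tau$ runs from $s$ to $t$, which is precisely the Caputo difference of the Cauchy function rebased at $\rho(s)$, so that $\nabla_{a^*}^\nu y(t)=\int_{a+N}^t \nabla_{\rho(s)^*}^\nu x(t,s)\,h(s)\,\nabla s$ with no leftover correction. Multiplying by $p(t)$ and differentiating once more through the integral (Theorem~\ref{leibnizFormula}), the new boundary term vanishes because a Caputo difference evaluated at its own base point is $0$. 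Adding $q(t)\,y(t-1)$, whose integral runs only to $t-1$, and peeling the $s=t$ term off the differentiated integral so both share the range up to $t-1$, their integrands combine into $L_{\rho(s)}^\nu x(t,s)$ and leave
\[
L_a^\nu y(t)=\int_{a+N}^{t-1} L_{\rho(s)}^\nu x(t,s)\,h(s)\,\nabla s + \big(\nabla_t[p(t)\,\nabla_{\rho(t)^*}^\nu x(t,t)]\big)\,h(t).
\]
The isolated term equals $h(t)$: the Caputo difference $\nabla_{\rho(t)^*}^\nu x(t,t)$ reduces to the single summand $H_{N-\nu-1}(t,t-1)\,\nabla^N x(t,t)=1/p(t)$, using $H_{N-\nu-1}(t,t-1)=1$ and the normalization $\nabla^N x(t,t)=1/p(t)$, while the same difference one step back is $0$, so $\nabla_t[p(t)\,\nabla_{\rho(t)^*}^\nu x(t,t)]=1$.

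It remains to show the integral vanishes, which is the main obstacle and the genuinely new content when $\nu>1$. For $s\le t-N$ the integrand is $0$ termwise, since the Cauchy equation $L_{\rho(s)}^\nu x(t,s)=0$ holds for $t\in\bbN_{s+N}$. When $N\ge2$, however, the near-diagonal band $t-N<s\le t-1$ places $t$ inside the initial-data window of the Cauchy problem based at $\rho(s)$, where the homogeneous equation is not asserted. The hard part will be to show that this band contributes nothing, the natural route being to verify that $\nabla_t[p(t)\,\nabla_{\rho(s)^*}^\nu x(t,s)]+q(t)\,x(t-1,s)$ vanishes there as well, computing $\nabla_{\rho(s)^*}^\nu x(t,s)$ directly from the initial conditions $\nabla^i x(\rho(s),s)=0$ and the composition rules of Theorem~\ref{compositionRule}. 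Once this near-diagonal cancellation is in hand, the integral is zero, $L_a^\nu y(t)=h(t)$, and uniqueness identifies $y$ as the solution.
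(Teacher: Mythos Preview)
Your argument tracks the paper's proof step for step: repeated Leibniz rules using $\nabla^{i}x(\rho(t),t)=0$ to push $\nabla^{N}$ inside the integral, expansion of the fractional sum and interchange of summation to rebase the Caputo difference at $\rho(s)$, one further Leibniz step to isolate the $h(t)$ term, and finally recombination of the remaining integrand with $q(t)\,y(t-1)$ into $L_{\rho(s)}^{\nu}x(t,s)$. The only cosmetic difference is in how $h(t)$ emerges: the paper applies the second Leibniz form at the last step, so the boundary contribution is directly $p(t)\,H_{N-\nu-1}(t,\rho(t))\,\nabla^{N}x(t,t)\,h(t)=h(t)$, whereas you apply the first form (boundary term zero) and then peel off the $s=t$ summand by hand. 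These are equivalent.

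Where you part ways with the paper is the near-diagonal band $t-N<s\le t-1$. You correctly note that the Cauchy equation $L_{\rho(s)}^{\nu}x(t,s)=0$ is asserted only for $t\in\bbN_{s+N}$, so for $N\ge 2$ those $N-1$ values of $s$ are not covered by the definition, and you flag this as ``the hard part'' to be handled by a direct computation you do not carry out. The paper does not isolate or argue this point at all: after combining, it simply writes
\[
\int_{a+N}^{t-1} h(s)\,L_{\rho(s)}^{\nu}x(t,s)\,\nabla s = 0
\]
and concludes. So you are being more scrupulous than the paper here; the issue you raise is real for $N\ge 2$, but its resolution is not part of the paper's proof, and your proposed attack on the band (direct evaluation from the initial data and Theorem~\ref{compositionRule}) would go beyond what the paper supplies.
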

\begin{proof}
First note that, by convention, $x(t) =\int_{a+N}^t x(t,s) h(s) \nabla s$ is $0$ for $t \in \bbN_a^{a+N}$. This equivalently shows that $\nabla^i x(a+i) = 0$ for $i \in \bbN_0^{N}$. Hence, $x(t)$ satisfies the initial conditions.

Consider
\begin{align*}
\nabla\left[ p(t) \nabla_{a^*}^\nu x(t) \right]  &= \nabla\left[p(t) \nabla_{a^*}^\nu \int_{a+N}^t x(t,s)h(s) \nabla s\right]  \\
=& \nabla \left[ p(t) \nabla_a^{-(N-\nu)} \nabla^N \int_{a+N}^t x(t,s) h(s) \nabla s \right] \\
\stackrel{\text{Thm } \ref{leibnizFormula}}{=} & \nabla \left[ p(t) \nabla_a^{-(N-\nu)} \nabla^{N-1}\left( \int_{a+N}^t \nabla_t x(t,s) h(s) \nabla s + x(\rho(t),t) h(s) \right) \right] \\
=& \nabla \left[ p(t) \nabla_a^{-(N-\nu)} \nabla^{N-1}\left( \int_{a+N}^t \nabla_t x(t,s) h(s) \nabla s + 0\cdot h(s) \right) \right] \\
\stackrel{\text{Thm } \ref{leibnizFormula}}{=} &  \nabla \left[ p(t) \nabla_a^{-(N-\nu)} \nabla^{N-2}\left( \int_{a+N}^t \nabla^2_t x(t,s) h(s) \nabla s + \nabla x(\rho(t),t) h(s) \right) \right]\\
=&  \nabla \left[ p(t) \nabla_a^{-(N-\nu)} \nabla^{N-2}\left( \int_{a+N}^t \nabla^2_t x(t,s) h(s) \nabla s + 0 \cdot h(s) \right) \right],
\end{align*}
using the fact that $\nabla^i x(\rho(t),t)=0 $ for $i \in \bbN_0^{N-1}$ from the definition of the Cauchy function. Applying Leibniz's formula $N-2$ more times yields
\begin{align*}
\nabla\left[ p(t) \nabla_{a^*}^\nu x(t) \right] &=  \nabla \left[ p(t) \nabla_a^{-(N-\nu)} \int_{a+N}^t \nabla^{N}_t x(t,s) h(s) \nabla s \right] \\
&= \nabla\left[p(t) \int_a^t H_{N-\nu-1}(t, \rho(\tau)) \left(\int_{a+N}^\tau \nabla_\tau^N x(\tau,s) h(s) \nabla s\right) \nabla \tau \right] \\
&= \nabla \left[ p(t) \sum_{\tau = a+1}^t \sum_{s= a+N+1}^\tau H_{N-\nu-1}(t,\rho(\tau)) \nabla_\tau^N x(\tau,s) h(s) \right].
\end{align*}
Interchanging the order of summation, we have
\begin{align*}
\nabla\left[ p(t) \nabla_{a^*}^\nu x(t) \right] &= \nabla \left[ p(t) \sum_{s=a+N+1}^t h(s) \sum_{\tau=s}^t H_{N-\nu-1}(t, \rho(\tau)) \nabla_\tau^N x(\tau,s) \right] \\
&= \nabla \left[\int_{a+N}^t p(t) h(s) \left(\int_{\rho(s)}^t H_{N-\nu-1}(t, \rho(\tau)) \nabla_\tau^N x(\tau,s) \nabla \tau\right) \nabla s \right].
\end{align*}
Applying Theorem \ref{leibnizFormula} and using the last initial condition for the Cauchy function, we get
\begin{align*}
\nabla\left[ p(t) \nabla_{a^*}^\nu x(t) \right] &= \int_{a+N}^{t-1} h(s) \nabla_t \left[p(t) \int_{\rho(s)}^t H_{N-\nu-1}(t, \rho(\tau)) \nabla_\tau^N x(\tau,s) \nabla \tau \right] \nabla s \\
&\quad\quad+ h(t) p(t) H_{N-\nu-1}(t, \rho(t)) \nabla_t^N x(t,t) \\
&=\left( \int_{a+N}^{t-1} h(s) \nabla_t \left[ p(t) \nabla_{\rho(s)^*}^\nu x(t,s) \right] \nabla s\right) + h(t) p(t) \frac{1}{p(t)}.
\end{align*}
Finally, consider
\begin{align*}
L_a^\nu x(t) &= \nabla[p(t) \nabla_{a^*}^\nu x(t)] + q(t) x(t-1) \\
&=h(t) + \int_{a+N}^{t-1} h(s) \nabla_t \left[ p(t) \nabla_{\rho(s)^*}^\nu x(t,s) \right] \nabla s + q(t) \int_{a+N}^{t-1} x(t-1,s) h(s) \nabla s \\
&=h(t) + \int_{a+N}^{t-1} h(s) L_{\rho(s)}x(t,s) \nabla s  \\
&=h(t).
\end{align*}
Therefore, $x(t) = \int_{a+N}^t x(t,s) h(s) \nabla s$ solves the above initial value problem.
\end{proof}

\begin{example} \label{cauchyFnctnWithP}
The Cauchy function for $\nabla[ p(t) \nabla_{a^*}^\nu x(t) ] =0$ is given by $x(t,s) = \nabla_{\rho(s)}^{-\nu} \frac{1}{p(t)}$.
\end{example}
\begin{proof}
We will verify that $x(t,s) = \nabla_{\rho(s)}^{-\nu} \frac{1}{p(t)}$ satisfies the IVP in Definition \ref{cauchyDefn}. Let $s \in \bbN_{a+N+1}$ be fixed and consider
\begin{align*}
\nabla\left[p(t) \nabla_{\rho(s)^*}^\nu x(t,s) \right] &= \nabla\left[p(t) \nabla_{\rho(s)}^{-(N-\nu)}\left( \nabla^N \left(\nabla_{\rho(s)}^{-\nu} \frac{1}{p(t)}\right)\right) \right] \\
&\stackrel{\text{Thm } \ref{compositionRule}}{=}  \nabla \left[ p(t) \nabla_{\rho(s)}^{-(N-\nu)}\left( \nabla_{\rho(s)}^{N-\nu} \frac{1}{p(t)}\right) \right] \\
&\stackrel{\text{Thm } \ref{compositionRule}}{=}  \nabla \left[p(t) \frac{1}{p(t)}\right]\\
&= 0,
\end{align*}
so the equation is satisfied. For $i \in \bbN_0^{N}$,
\[
\nabla^i x(t,s) = \nabla^i \nabla_{\rho(s)}^{-\nu} \frac{1}{p(t)}\stackrel{\text{Thm } \ref{compositionRule}}{=} \nabla_{\rho(s)}^{-(\nu-i)} \frac{1}{p(t)} = \int_{\rho(s)}^t H_{\nu-i-1}(t,\rho(\tau))\frac{1}{p(\tau)} \nabla \tau.
\]
Note, when $i \in \bbN_0^{N-1}$ and $t =\rho(s)$, we take by convention that the above integral is $0$. When $i = N$ and $t=s$, we have
\begin{align*}
\left. \nabla^{N} x(t,s)\right|_{t=s} &=\left. \int_{\rho(s)}^t H_{\nu-N-1}\left(t,\rho\left(\tau\right)\right)\frac{1}{p(\tau)} \nabla \tau \right|_{t=s}\\
&= \sum_{\tau = s}^s H_{\nu-N-1}\left(s, \rho\left(\tau\right)\right) \frac{1}{p(\tau)} \\
&= H_{\nu-N}(s,\rho(s)) \frac{1}{p(s)} \\
&= \frac{1}{p(s)},
\end{align*}
thus $x(t,s)$ satisfies the IVP in Definition \ref{cauchyDefn}. Hence, $x(t,s) = \nabla_{\rho(s)}^{-\nu} \frac{1}{p(t)}$ is the Cauchy function for $\nabla[ p(t) \nabla_{a^*}^\nu x(t) ] =0$.

\end{proof}

\begin{example}\label{mostBasicCauchyFnctn}
The Cauchy function for $\nabla \nabla_{a^*}^\nu x(t) = 0$ is given by $x(t,s) = H_\nu(t,\rho(s))$.
\end{example}
\begin{proof}
This is a specific case of Example \ref{cauchyFnctnWithP} where $p(t) \equiv 1$. Hence,
\begin{align*}
x(t,s) &= \nabla_{\rho(s)}^{-\nu} 1 \\
&= \int_{\rho(s)}^t H_{\nu-1}(t, \rho(\tau)) \nabla \tau \\
&=H_\nu(t,\rho(s)),
\end{align*}
using a property of Taylor monomials given in \cite[p. 186]{goodrichPeterson}.
\end{proof}

\section{Boundary Value Problems Involving $L_a^\nu$}

In this section, we will explore boundary value problems involving $L_a^\nu$ with $N$ boundary conditions at the left endpoint, and $1$ boundary condition at the right endpoint. The general boundary conditions used here can account for several specific boundary conditions. For the case $0 < \nu <1$, \cite{ahrendt2017} already considered conjugate boundary conditions, and \cite{StGoar} already considered right-focal boundary conditions. For $\nu >1$, one could consider various other standard boundary conditions, and as an example, we will look at  (2,1) conjugate BVP in the last section of the paper.

In particular, we are interested in the homogeneous self-adjoint boundary value problem
\begin{equation} \label{homogeneousBVP}
\begin{cases}
L_a^\nu x(t) = 0, \quad t \in \bbN_{a+N+1}^b, \\
\sum_{j=0}^N \alpha_{ij} \nabla^j x(a+j) =0, \quad i \in \bbN_0^{N-1}, \\
\sum_{j=0}^N \beta_j \nabla^j x(b) =0, 
\end{cases}
\end{equation}
and the corresponding nonhomogeneous boundary value problem
\begin{equation}\label{nonHomogeneousBVP}
\begin{cases}
L_a^\nu x(t) = h(t), \quad t \in \bbN_{a+N+1}^b, \\
\sum_{j=0}^N \alpha_{ij} \nabla^j x(a+j) =A_i, \quad i \in \bbN_0^{N-1}, \\
\sum_{j=0}^N \beta_{j} \nabla^j x(b) =B,
\end{cases}
\end{equation}
where $b-a \in \bbN_{N+1}$; $\alpha_{ij} \in \bbR$ for $j \in \bbN_0^N$ such that $\sum_{j=0}^N \alpha_{ij}^2 > 0$ for each $i \in \bbN_0^{N-1}$; $\beta_j \in \bbR$ for $j \in \bbN_0^N$ such that $\sum_{j=0}^N \beta^2_j > 0$; $\left\{\langle \alpha_{i0}, \alpha_{i1}, \ldots, \alpha_{iN} \rangle\right\}_{i=0}^{N-1}$ are linearly independent; $A_i, B \in \bbR$ for $i \in \bbN_0^{N-1}$; and $h: \bbN_{a+N+1}^b \to \bbR$.

\begin{theorem}
If \eqref{homogeneousBVP} has only the trivial solution, then \eqref{nonHomogeneousBVP} has a unique solution.
\begin{proof}
Let $x_k : \bbN_a \to \bbR$, for $k\in \bbN_0^{N}$ be $N+1$ linearly independent solutions to $L_a^\nu x(t) =0$. This implies, by Theorem \ref{generalSolutionThm}, a general solution is given by $x(t) = \sum_{k=0}^N c_k x_k(t)$, where $c_k \in \bbR$ for $k \in \bbN_0^N$ are arbitrary constants. For convenience, let
\begin{align*}
x_k^i &:= \sum_{j=0}^N \alpha_{ij} \nabla^j x_k(a+j), \quad \text{for } i \in \bbN_0^{N-1},\\
x_k^N &:= \sum_{j=0}^N \beta_j \nabla^j x_k(b).
\end{align*}

Note that $x(t)$ satisfies the boundary conditions in \eqref{homogeneousBVP} if and only if it satisfies the following system of equations:
\[
\begin{cases}
\sum_{k=0}^N c_k x_k^0 &= 0, \\
\sum_{k=0}^N c_k x_k^1 &= 0, \\
\quad\quad\vdots &=\vdots \\
\sum_{k=0}^N c_k x_k^N &=0,
\end{cases}
\]
if and only if the vector equation
\[
\underbrace{
\left(
\begin{matrix}
x_0^0 & x_1^0 & \cdots & x_N^0 \\
x_0^1 & x_1^1 & \cdots & x_N^1 \\
\vdots & \vdots & \ddots & \vdots \\
x_0^N & x_1^N & \cdots & x_N^N
\end{matrix}
\right)
}_{=:D}
\left(
\begin{matrix}
c_0 \\
c_1 \\
\vdots \\
c_N
\end{matrix}
\right)
=
\left(
\begin{matrix}
0 \\
0\\
\vdots \\
0
\end{matrix}
\right).
\]

We have that $x(t)$ is the trivial solution if and only if $c_0=c_1=\cdots=c_N=0$ which happens if and only if $\det(D)\neq0$, noting that $\left\{\langle \alpha_{i0}, \alpha_{i1}, \ldots, \alpha_{iN} \rangle\right\}_{i=0}^{N-1}$ are linearly independent.

By Corollary \ref{nonHomogeneousGeneralSolutionThm}, a general solution to \eqref{nonHomogeneousBVP} is given by $x(t) = x_p(t)+ \sum_{k=0}^N a_k x_k(t)$, where $a_k \in \bbR$, for $k \in \bbN_0^N$, are arbitrary constants and $x_p : \bbN_a\to \bbR$ is a particular solution to $L_a^\nu x(t) =h(t)$. To satisfy the boundary conditions in \eqref{nonHomogeneousBVP}, $x(t)$ must satisfy the vector equation
\begin{equation} \label{nonHomogeneousVector}
D \left(
\begin{matrix}
a_0\\
a_1\\
\vdots\\
a_N
\end{matrix}
\right)
=
\left(
\begin{matrix}
A_0 - \sum_{j=0}^N \alpha_{0j} \nabla^j x_p(a+j) \\
A_1 - \sum_{j=0}^N \alpha_{1j}\nabla^j x_p(a+j) \\
\vdots \\
A_{N-1} - \sum_{j=0}^N \alpha_{(N-1)j} \nabla^j x_p(a+j) \\
B - \sum_{j=0}^N \beta_j\nabla^j x_p(b).
\end{matrix}
\right)
\end{equation}

Since the homogeneous BVP \eqref{homogeneousBVP} has only the trivial solution, we have $\det(D) \neq 0$, so \eqref{nonHomogeneousVector} is satisfied with unique values for $a_i$ for $i \in \bbN_0^N$. Hence, $x(t) = x_p(t)+ \sum_{k=0}^N a_k x_k(t)$ uniquely solves the nonhomogeneous BVP \eqref{nonHomogeneousBVP}.

\end{proof}
\end{theorem}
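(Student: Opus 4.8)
The plan is to reduce the solvability question to the invertibility of a single $(N+1)\times(N+1)$ boundary-condition matrix, following the standard template for linear two-point boundary value problems. First I would note that the initial value theory (Theorem \ref{existenceUniquenessThm}) supplies $N+1$ linearly independent solutions $x_0, x_1, \ldots, x_N : \bbN_a \to \bbR$ of $L_a^\nu x(t) = 0$, so that by Theorem \ref{generalSolutionThm} every solution of the homogeneous equation has the form $x(t) = \sum_{k=0}^N c_k x_k(t)$ for constants $c_k \in \bbR$.

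Next I would substitute this general solution into the $N+1$ boundary functionals of \eqref{homogeneousBVP}. Abbreviating $x_k^i := \sum_{j=0}^N \alpha_{ij} \nabla^j x_k(a+j)$ for $i \in \bbN_0^{N-1}$ and $x_k^N := \sum_{j=0}^N \beta_j \nabla^j x_k(b)$, the boundary conditions become the homogeneous linear system $D\mathbf{c} = \mathbf{0}$, where $D = (x_k^i)$ is the matrix whose row $i$ records the $i$-th boundary functional evaluated on each basis solution. Because the $x_k$ are linearly independent, $x \equiv 0$ forces $\mathbf{c} = \mathbf{0}$; hence the homogeneous BVP having only the trivial solution is equivalent to $D\mathbf{c} = \mathbf{0}$ having only the zero solution, i.e. to $\det(D) \neq 0$.

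Finally, for the nonhomogeneous problem I would appeal to Corollary \ref{nonHomogeneousGeneralSolutionThm} to write the general solution of $L_a^\nu x(t) = h(t)$ as $x(t) = x_p(t) + \sum_{k=0}^N a_k x_k(t)$ for a fixed particular solution $x_p$. Imposing the boundary conditions of \eqref{nonHomogeneousBVP} and transferring the contributions of $x_p$ to the right-hand side produces the system $D\mathbf{a} = \mathbf{v}$, where $\mathbf{v}$ records the data $A_0, \ldots, A_{N-1}, B$ corrected by the boundary functionals applied to $x_p$. Since $\det(D) \neq 0$ under the hypothesis, this system determines a unique coefficient vector $\mathbf{a}$, which yields a unique solution of the nonhomogeneous BVP.

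I expect the main obstacle to be the bookkeeping in the middle step: one must verify cleanly that the function-level statement ``the homogeneous BVP has only the trivial solution'' is genuinely equivalent to the matrix-level statement $\det(D) \neq 0$. This equivalence hinges both on the general-solution representation of Theorem \ref{generalSolutionThm} and on the linear independence of the $x_k$, so that the only function with all coefficients zero is the zero function. The standing hypotheses that each boundary-coefficient vector is nonzero and that the vectors $\langle \alpha_{i0}, \ldots, \alpha_{iN}\rangle$ are linearly independent are what keep the rows of $D$ from degenerating for trivial reasons. Once this equivalence is in hand, the existence-and-uniqueness conclusion for \eqref{nonHomogeneousBVP} is immediate from the nonsingularity of $D$.
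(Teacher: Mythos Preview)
Your proposal is correct and follows essentially the same approach as the paper: it introduces the same matrix $D$ of boundary functionals applied to a basis of homogeneous solutions, identifies ``only the trivial solution'' with $\det(D)\neq 0$, and then uses Corollary~\ref{nonHomogeneousGeneralSolutionThm} to reduce the nonhomogeneous BVP to the uniquely solvable system $D\mathbf{a}=\mathbf{v}$. Even your notation $x_k^i$ and your remark about the role of the linear independence of the $\alpha$-vectors match the paper's argument.
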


\begin{definition}[Green's Function] \label{greensFnctnDefn}
Assume that \eqref{homogeneousBVP} has only the trivial solution. The \textit{Green's function} $G(t,s)$, where $G:\mathbb{N}_a^b\times\mathbb{N}_{a+N+1}^b\to\mathbb{R}$, for the homogeneous boundary value problem \eqref{homogeneousBVP} is given by
\[
G(t,s):=\begin{cases} u(t,s), &t\in\bbN_a^{b-N} \textrm{ and } s\in\bbN_{\max\{t+1,a+N+1\}}^b, \\ 
v(t,s),&t\in\mathbb{N}_{a+N}^b \textrm{ and } s\in\mathbb{N}_{a+N+1}^{\min\{t+1,b\}},
\end{cases}
\]
where, for each fixed $s\in\mathbb{N}_{a+N+1}^b$, $u(t,s)$ solves the boundary value problem
\begin{equation} \label{uGreensBVP}
\begin{cases} 
L_a^{\nu}u(t,s)=0, \quad t\in\mathbb{N}_{a+N+1}^b, \\
\sum_{j=0}^N \alpha_{ij} \nabla_t^j u(t,s)|_{t=a+j}=0, \quad i\in\mathbb{N}_0^{N-1}, \\
\sum_{j=0}^N \beta_{j} \nabla_t^j u(t,s)|_{t=b} = -\sum_{j=0}^N \beta_{j} \nabla_t^j x(t,s)|_{t=b},
\end{cases}
\end{equation}
and $v(t,s):= u(t,s) + x(t,s)$, where $x(t,s)$ is the Cauchy function for $L_a^{\nu}x(t)=0$.
\end{definition}

\begin{theorem}[Green's Function Theorem] \label{greensFnctnThm}
If \eqref{homogeneousBVP} has only the trivial solution, then the unique solution to \eqref{nonHomogeneousBVP}, with $A_i = B =0$ for $i \in \bbN_0^{N-1}$, is given by
\[
x(t) = \int_{a+N}^b G(t,s)h(s) \nabla s,
\]
where $G(t,s)$ is the Green's function for the homogeneous boundary value problem \eqref{homogeneousBVP}.
\end{theorem}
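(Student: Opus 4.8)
The plan is to show the proposed $x(t) := \int_{a+N}^b G(t,s)h(s)\nabla s$ solves the nonhomogeneous problem \eqref{nonHomogeneousBVP} with $A_i = B = 0$, and then appeal to the uniqueness established in the previous theorem. Writing the integral as the finite sum $\sum_{s=a+N+1}^b G(t,s)h(s)$ and inserting the piecewise definition from Definition \ref{greensFnctnDefn}, I would split the $s$-sum at $s = t+1$, using $v(t,s) = u(t,s) + x(t,s)$ where $v$ governs $G$ and $u(t,s)$ alone where $u$ governs $G$. The first Cauchy initial condition gives $x(t,t+1) = \nabla^0 x(\rho(s),s)|_{s=t+1} = 0$, so the two pieces are consistent at the shared index $s = t+1$ and the $x$-term contributes only for $s \le t$. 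This reorganizes the solution as
\[
x(t) = \underbrace{\int_{a+N}^b u(t,s)h(s)\nabla s}_{=:\,w(t)} \; + \; \underbrace{\int_{a+N}^t x(t,s)h(s)\nabla s}_{=:\,z(t)},
\]
where $z(t)$ is precisely the variation of constants expression from Theorem \ref{variationOfConstantsThm}.

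I would then verify the three defining features of \eqref{nonHomogeneousBVP}. Because the limits $a+N, b$ of $w$ are independent of $t$, the $t$-differences in $L_a^\nu$ commute with the finite sum over $s$, so $L_a^\nu w(t) = \sum_s \big(L_a^\nu u(\cdot,s)\big)(t)\,h(s) = 0$ by \eqref{uGreensBVP}, while $L_a^\nu z(t) = h(t)$ directly by Theorem \ref{variationOfConstantsThm}. Hence $L_a^\nu x(t) = h(t)$ on $\bbN_{a+N+1}^b$. For the left boundary conditions, Theorem \ref{variationOfConstantsThm} gives $\nabla^j z(a+j) = 0$ for every $j \in \bbN_0^N$, so only $w$ survives, and commuting $\nabla_t^j$ past the sum gives $\sum_{j=0}^N \alpha_{ij}\nabla^j w(a+j) = \sum_s \big(\sum_{j=0}^N \alpha_{ij}\nabla_t^j u(t,s)|_{t=a+j}\big)h(s) = 0$ by the left conditions in \eqref{uGreensBVP}.

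The crux is the single right-endpoint condition, where both $w$ and $z$ contribute. For $w$, the right condition of \eqref{uGreensBVP} yields $\sum_{j=0}^N \beta_j \nabla^j w(b) = -\sum_s \big(\sum_{j=0}^N \beta_j \nabla_t^j x(t,s)|_{t=b}\big)h(s)$. For $z$, since the upper limit depends on $t$, I would compute $\nabla^j z$ by applying the Leibniz rule (Theorem \ref{leibnizFormula}) $j$ times; the boundary term generated at the $k$-th application is $\nabla^{k-1}x(\rho(t),t)\,h(t)$ with $k-1 \le j-1 \le N-1$, which vanishes by the Cauchy conditions of Definition \ref{cauchyDefn}. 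This gives $\nabla^j z(t) = \int_{a+N}^t \nabla_t^j x(t,s)h(s)\nabla s$ for $j \in \bbN_0^N$, whence $\sum_{j=0}^N \beta_j \nabla^j z(b) = +\sum_s \big(\sum_{j=0}^N \beta_j \nabla_t^j x(t,s)|_{t=b}\big)h(s)$, exactly cancelling the $w$ contribution. Thus $\sum_{j=0}^N \beta_j \nabla^j x(b) = 0$, all conditions hold, and uniqueness identifies $x$ as the solution.

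I expect the bookkeeping in this repeated Leibniz differentiation — verifying that every boundary term up through order $j = N$ is annihilated because the Cauchy data forces $\nabla^i x(\rho(s),s) = 0$ for $i \in \bbN_0^{N-1}$ — to be the main obstacle. Everything else reduces to linearity, the commuting of $t$-operators with the fixed-limit sum defining $w$, and the two already-established identities $L_a^\nu z = h$ and $\nabla^i z(a+i)=0$ for $z$.
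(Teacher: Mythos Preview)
Your proposal is correct and follows essentially the same route as the paper: decompose $x(t)$ into $w(t)=\int_{a+N}^b u(t,s)h(s)\nabla s$ and $z(t)=\int_{a+N}^t x(t,s)h(s)\nabla s$, invoke Theorem~\ref{variationOfConstantsThm} for $z$, and use the defining BVP~\eqref{uGreensBVP} for $w$. Your treatment of the right boundary condition is in fact more explicit than the paper's --- you spell out the repeated Leibniz step and the vanishing of the boundary terms via the Cauchy initial data $\nabla^i x(\rho(s),s)=0$ for $i\in\bbN_0^{N-1}$, whereas the paper simply writes $\nabla^j$ of the $z$-integral at $t=b$ as an integral with fixed limits and moves on.
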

\begin{proof}
Assume that \eqref{homogeneousBVP} has only the trivial solution. Then, for $t\in\mathbb{N}_a^b$, we have
\begin{align*}
x(t) &= \int_{a+N}^b G(t,s)h(s)\nabla s \\
&= \int_{a+N}^t v(t,s)h(s)\nabla s + \int_t^b u(t,s)h(s)\nabla s \\
&= \int_{a+N}^t [u(t,s)+x(t,s)]h(s)\nabla s + \int_t^b u(t,s)h(s)\nabla s \\
&= \int_{a+N}^b u(t,s)h(s)\nabla s + \int_{a+N}^t x(t,s)h(s)\nabla s.
\end{align*}

By Theorem \ref{variationOfConstantsThm}, we have that $z(t):= \int_{a+N}^t x(t,s)h(s) \nabla s$ solves the initial value problem
\[
\begin{cases}
L_a^{\nu}z(t)=h(t),\quad t\in\mathbb{N}_{a+N+1}, \\
\nabla^i z(a+i)= 0, \quad i\in\mathbb{N}_0^N.
\end{cases}
\]
Hence, we rewrite $x(t)=\int_{a+N}^b u(t,s)h(s) \nabla s +z(t).$ Applying the operator $L_a^{\nu}$ yields
\begin{align*}
L_a^{\nu} x(t) &= L_a^{\nu}\int_{a+N}^b u(t,s)h(s) \nabla s + L_a^{\nu}z(t) \\
&= \int_{a+N}^b L_a^{\nu}u(t,s)h(s) \nabla s+ h(t) \\
&= \int_{a+N}^b 0\cdot h(s) \nabla s+ h(t) \\
&= h(t).
\end{align*}
Thus, $x(t)=\int_{a+N}^b G(t,s)h(s)\nabla s$ solves the appropriate fractional difference equation.

Consider the left boundary conditions; i.e. for fixed $i\in\bbN_0^{N-1}$,
\begin{align*}
\sum_{j=0}^N \alpha_{ij}\nabla^j x(a+j) &= \sum_{j=0}^N \alpha_{ij}\left[\nabla^j \left. \int_{a+N}^b u(t,s)h(s) \nabla s \right|_{t=a+j}+\nabla^j z(a+j)\right] \\
&= \int_{a+N}^b \left[\sum_{j=0}^N \alpha_{ij}\left.\nabla_t^j u(t,s)\right|_{t=a+j}\right]h(s)\nabla s+\sum_{j=0}^N \alpha_{ij}\nabla^j z(a+j) \\
&= \int_{a+N}^b 0 \cdot h(s)\nabla s+\sum_{j=0}^N a_{ij} \cdot 0 \\
&= 0.
\end{align*}
Therefore, the $N$ left boundary conditions are satisfied. Now consider the right boundary condition
\begin{align*}
\sum_{j=0}^N \beta_{j}\nabla^j x(b) &= \sum_{j=0}^N \beta_{j}\left[\nabla^j\left( \int_{a+N}^b u(b,s)h(s) \nabla s + \int_{a+N}^b x(b,s) h(s) \nabla s \right)\right] \\
&= \int_{a+N}^b \left[\sum_{j=0}^N \beta_{j} \nabla^j u(b,s) \right]h(s) \nabla s + \int_{a+N}^b \left[\sum_{j=0}^N \beta_{j} \nabla^j x(b,s) \right]h(s) \nabla s \\
&= 0,
\end{align*}
using the right boundary condition in the Green's function definition. This shows the right boundary condition is satisfied, so $x(t)=\int_{a+N}^b G(t,s)h(s) \nabla s$ solves \eqref{nonHomogeneousBVP} where $A_i=B=0$, for $i\in\bbN_0^{N-1}$.
\end{proof}

\section{Green's Function for a (2,1) Conjugate Boundary Value Problem}
We will now focus on a specific case of the previous section's boundary value problem. Here, we consider the operator $L_a^\nu$ where $p(t)\equiv 1$, $q(t)\equiv 0$, and $1 < \nu < 2$; i.e. the fractional operator we will consider is $\nabla \nabla_{a^*}^\nu$, and we will use (2,1) conjugate boundary conditions. The homogeneous BVP is
\begin{equation}\label{conjugateHomogeneousBVP}
\begin{cases}
\nabla \nabla_{a^*}^\nu x(t) = 0, \quad t \in \bbN_{a+3}^b, \\
x(a) = 0, \\
\nabla x(a+1) = 0,\\
x(b) = 0,
\end{cases}
\end{equation}
and the corresponding nonhomogeneous BVP is
\begin{equation}\label{conjugateNonHomogeneousBVP}
\begin{cases}
\nabla \nabla_{a^*}^\nu x(t) = h(t), \quad t \in \bbN_{a+3}^b, \\
x(a) = A, \\
\nabla x(a+1) = B,\\
x(b) = C,
\end{cases}
\end{equation}
for $A$, $B$, $C \in \bbR$ and $h: \bbN_{a+3}^b \to \bbR$.

\begin{theorem}
The Green's function for \eqref{conjugateHomogeneousBVP} is given by

\begin{equation}\label{conjugateGreensFunction}
G(t,s) = \begin{cases}
-H_\nu(b,\rho(s)) \frac{t-a-H_\nu(t,a)}{b-a-H_\nu(b,a)}, & t \in \bbN_a^{b-2}\text{ and } s \in \bbN_{\max\left(t+1,a+3\right)}^b, \\
-H_\nu(b,\rho(s)) \frac{t-a-H_\nu(t,a)}{b-a-H_\nu(b,a)} +H_\nu(t,\rho(s)), & t \in \bbN_{a+2}^b \text{ and } s \in \bbN_{a+3}^{\min\left(t+1,b\right)}.
\end{cases}
\end{equation}
\end{theorem}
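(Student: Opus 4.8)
The plan is to specialize the general Green's function construction of Definition \ref{greensFnctnDefn} to the present setting $N=2$, $p(t)\equiv 1$, $q(t)\equiv 0$, $1<\nu<2$, and to read off the boundary data: the left conditions $x(a)=0$ and $\nabla x(a+1)=0$ correspond to $\alpha_{00}=1$ and $\alpha_{11}=1$ (all other $\alpha_{ij}=0$), while the right condition $x(b)=0$ corresponds to $\beta_0=1$, $\beta_1=\beta_2=0$. By Example \ref{mostBasicCauchyFnctn} the Cauchy function for $\nabla\nabla_{a^*}^\nu x(t)=0$ is $x(t,s)=H_\nu(t,\rho(s))$, so $v(t,s)=u(t,s)+H_\nu(t,\rho(s))$ and the whole problem collapses to determining the single function $u(t,s)$.

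The first substantive step is to produce a fundamental set of solutions to $\nabla\nabla_{a^*}^\nu x(t)=0$. Because this operator behaves like order $N+1=3$, I need three linearly independent solutions, and I claim $x_0(t)=1$, $x_1(t)=H_1(t,a)=t-a$, and $x_2(t)=H_\nu(t,a)$ suffice. The first two are immediate, since $\nabla^2$ annihilates a constant and a first-degree term, forcing their Caputo differences to vanish. For $x_2$ I would use the power rule $\nabla H_\nu(t,a)=H_{\nu-1}(t,a)$ for Taylor monomials together with the fractional-integral property $\nabla_a^{-(2-\nu)}H_{\nu-2}(t,a)=H_0(t,a)$ to compute, directly from the definition of the Caputo difference, $\nabla_{a^*}^\nu H_\nu(t,a)=\nabla_a^{-(2-\nu)}\nabla^2 H_\nu(t,a)=H_0(t,a)=1$, whose nabla difference is $0$. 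After checking linear independence, Theorem \ref{generalSolutionThm} lets me write $u(t,s)=c_0+c_1(t-a)+c_2 H_\nu(t,a)$ with coefficients depending on $s$.

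Next I would impose the three conditions of \eqref{uGreensBVP}. Evaluating the monomials at the endpoints via the rising-function convention gives $H_\nu(a,a)=0$ and $H_{\nu-1}(a+1,a)=1$, so $u(a,s)=0$ forces $c_0=0$, and $\nabla_t u(t,s)|_{t=a+1}=c_1+c_2=0$ forces $c_1=-c_2$. The right condition $u(b,s)=-H_\nu(b,\rho(s))$ then becomes $c_2\bigl[H_\nu(b,a)-(b-a)\bigr]=-H_\nu(b,\rho(s))$, giving $c_2=H_\nu(b,\rho(s))/\bigl[(b-a)-H_\nu(b,a)\bigr]$. Substituting back and simplifying $c_2\bigl[H_\nu(t,a)-(t-a)\bigr]$ reproduces exactly the first branch of \eqref{conjugateGreensFunction}; adding the Cauchy function yields the second branch $v=u+H_\nu(t,\rho(s))$, and matching the domains in Definition \ref{greensFnctnDefn} (with $b-N=b-2$) completes the identification.

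The one step that requires genuine care rather than bookkeeping is justifying that the construction is legitimate, i.e.\ that \eqref{conjugateHomogeneousBVP} has only the trivial solution, equivalently that the denominator $b-a-H_\nu(b,a)$ is nonzero. Applying the same coefficient computation to the homogeneous problem shows its general solution satisfies the boundary conditions only if $c_2\bigl[H_\nu(b,a)-(b-a)\bigr]=0$, so I must establish $H_\nu(b,a)\neq b-a$ for $1<\nu<2$ and $b-a\in\bbN_3$. Writing $m=b-a\geq 3$ and $H_\nu(b,a)=\Gamma(m+\nu)/[\Gamma(m)\Gamma(\nu+1)]$, I would note $H_1(b,a)=m$ and that $H_\nu(b,a)$ is strictly increasing in $\nu$ on $(1,2)$, since $\partial_\nu\log H_\nu(b,a)=\psi(m+\nu)-\psi(\nu+1)>0$ because $m>1$ and the digamma function is increasing; hence $H_\nu(b,a)>m=b-a$, so the denominator is strictly negative and in particular nonzero. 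This confirms the Green's function is well defined and that the displayed formula \eqref{conjugateGreensFunction} is the correct one.
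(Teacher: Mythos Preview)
Your proof is correct and follows essentially the same route as the paper: identify a basis of solutions (you use $1,\ t-a,\ H_\nu(t,a)$ where the paper uses $1,\ t,\ H_\nu(t,a)$, a cosmetic difference), solve the linear system from \eqref{uGreensBVP} with the Cauchy function $H_\nu(t,\rho(s))$, and read off $u$ and $v$. Your additional digamma argument that $H_\nu(b,a)>b-a$ for $1<\nu<2$ and $b-a\ge 3$, hence that the homogeneous problem has only the trivial solution, supplies detail the paper merely asserts.
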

\begin{proof}
It can be shown that these boundary conditions imply that \eqref{conjugateHomogeneousBVP} has only the trivial solution by recognizing three linearly independent solutions to $\nabla \nabla_{a^*}^\nu x(t) =0$ are given by $x_1(t) = 1$, $x_2(t)= t^{\overline{1}}$, and $x_3(t) = \nabla_a^{-\nu} 1$. So, we consider BVP \eqref{uGreensBVP} in Definition \ref{greensFnctnDefn}.

A general solution to \eqref{uGreensBVP} is given by $u(t,s) = c_1(s) + c_2(s) t + c_3(s) \nabla_a^{-\nu}1$. Since $(\nabla_a^{-\nu}1) (t) = H_\nu(t,a)$, we have that $u(t,s) = c_1(s) + c_2(s)t + c_3(s) H_\nu(t,a)$. This implies $\nabla_t u(t,s) = c_2(s) + c_3(s) H_{\nu-1}(t,a)$. Applying the first, second, and third boundary conditions, we have
\begin{align*}
c_1(s) + c_2(s)a &= 0 \\
c_2(s) + c_3(s) &= 0 \\
c_1(s) + c_2(s)b + c_3(s) H_\nu(b,a) &= -H_\nu(b, \rho(s)),
\end{align*}
noting that in the third boundary condition we use Example \ref{mostBasicCauchyFnctn}, where the Cauchy function is given by $x(t,s) = H_\nu(t,\rho(s))$.
Solving this linear system of equations, we find that
\[
\left( \begin{matrix} c_1(s) \\ c_2(s) \\ c_3(s) \end{matrix} \right) = - \frac{H_\nu(b,\rho(s))}{b-a-H_\nu(b,a)} \left( \begin{matrix} -a \\ 1 \\ -1 \end{matrix} \right),
\]
implying
\[
u(t,s) = - H_\nu(b,\rho(s))\frac{t-a-H_\nu(t,a)}{b-a-H_\nu(b,a)}.
\]
\end{proof}
Since the Cauchy function for $\nabla \nabla_{a^*}^\nu x(t)=0$ is $H_\nu(t,\rho(s))$, we have $v(t,s) = u(t,s) + H_\nu(t,\rho(s))$.

\end{document}